\newtheorem{theorem}{Theorem}[section]
\newtheorem{proposition}[theorem]{Proposition}
\newtheorem{corollary}[theorem]{Corollary}
\newcommand{\Z}{{\mathbb Z}}
\newcommand{\F}{{\mathbb F}}
\newcommand{\G}{\mathcal{G}}
\newcommand{\B}{\mathcal{B}}
\newcommand{\GEN}[1]{\left\langle #1 \right\rangle}
\newenvironment{proofof}{\par\noindent \textit{Proof of }}{\qed\par\bigskip}
\newcommand{\qand}{\quad \text{and} \quad}
\DeclareMathOperator{\Exp}{Exp}
\title[The Modular Isomorphism Problem for 2-generated groups of class 2]{The Modular Isomorphism Problem for two generated groups of class two}
\author{Osnel Broche and \'Angel del R\'\i o}
\thanks{The first author has been partially supported by Fundación Séneca of Murcia under a Jiménez de la Espada grant 20598/IV/18.
The second author has been partially supported by the Spanish Government under Grant MTM2016-77445-P with "Fondos FEDER" and, by Fundaci\'on S\'eneca of Murcia under Grants 19880/GERM/15}
\address{O. Broche: Departamento de Ci\^{e}ncias Exatas, Universidade Federal de Lavras, Caixa Postal 3037, 37200-000, Lavras, Brazil. \rm{osnel@ufla.br}}
\address{\'{A}. del R\'{i}o: Departamento de Matem\'{a}ticas, Universidad de Murcia, 30100, Murcia, Spain. \rm{adelrio@um.es}}
\keywords{Group rings, Modular Isomorphism Problem}
\subjclass{16U60, 16S34, 20C05, 20C10}
\begin{document}
\maketitle

\begin{abstract}
We prove that if $G$ is finite 2-generated $p$-group of nilpotence class at most 2 then the group algebra of $G$ with coefficients in the field with $p$ elements determines $G$ up to isomorphisms.
\end{abstract}

Let $G$ be a finite $p$-group and let $\F_p$ be the field with $p$ elements. 
A long standing question is whether the group algebra $\F_pG$ determines $G$ up to isomorphisms.
Formally it is the following problem:

\begin{quote}
\textbf{Modular Isomorphism Problem (MIP)}.\\
Does $\F_pG\cong \F_pH$ implies $G\cong H$ for $G$ and $H$ finite $p$-groups?
\end{quote}

Positive solutions for (MIP) has only been proved under strong conditions. For example, it has been verified for groups of order $p^n$ with $n\le 5$ \cite{Passman1965, Makasikis, Sandling1984, Sandling1989}, and, using computers it has been verified for groups of order $2^n$ with $n\le 9$ and $3^6$ \cite{Wursthorn1990, Wursthorn1993, BKRW,Eick2008,EickKonovalov2011} (see also the computer free treatement for groups of order $2^6$ in \cite{HertweckSoriano2006}). 
The (MIP) has also been proved for abelian \cite{Deskins1956,Coleman1964} and metacyclic groups \cite{Baginski1988, Sandling1996}.
Other results on the Modular Isomorphism Problem can be found in the introductory lists in \cite{BaginskiKonovalov} and \cite{HertweckSoriano2006}.

The Modular Isomorphism Problem is still open for groups of class 2 except in very restricted situations. For example, it has been proven for these groups in case the commutator is elementary abelian \cite[Theorem~6.25]{Sandling1984} or the center has index $p^2$ \cite{Drensky1989}. At least it is known that if $\F_p G\cong \F_p H$ and $G$ is a $p$-group of class 2 then so is $H$ \cite{BaginskiKonovalov} and if $G$ is $n$-generated then so is $H$. The aim of this paper is to prove (MIP) for 2-generated groups of class 2. Formally we prove the following result:

\begin{theorem}\label{Main}
Let $G$ be and $H$ be finite $p$-groups such that $\F_p G\cong \F_p H$. If $G$ is 2-generated group of class at most 2 then $G\cong H$. 
\end{theorem}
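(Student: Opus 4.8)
The plan is to combine a classification of finite nonabelian $2$-generated $p$-groups of class $2$ by a short list of arithmetic invariants with a proof that each of those invariants is visible in the modular group algebra. So, first, the reductions: since $\F_pG\cong\F_pH$ we have $|H|=|G|$, $H$ has the same minimal number of generators as $G$, and by \cite{BaginskiKonovalov} also the same nilpotency class; thus $H$ is again a finite $2$-generated $p$-group of class at most $2$. If $G$ is abelian then so is $H$ (commutativity of the group algebra is intrinsic) and $G\cong H$ by \cite{Deskins1956,Coleman1964}, so I may assume $G$, hence $H$, nonabelian of class exactly $2$. Then $G'=[G,G]\le Z(G)$, and since $G$ is $2$-generated the commutator induces a bi-additive map $G/G'\times G/G'\to G'$, whence $G'=\langle[x,y]\rangle$ is cyclic whenever $G=\langle x,y\rangle$; write $|G'|=p^{m}$. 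If $m\le 1$ then $G'$ is elementary abelian and $G\cong H$ by \cite[Theorem~6.25]{Sandling1984} (alternatively $|G:Z(G)|=p^{2}$ and \cite{Drensky1989} applies), so from now on $m\ge 2$, and I argue by induction on $|G|$.

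Next I would set up a normal form for the groups in question. Choosing $x,y$ so that $\{\bar x,\bar y\}$ is an ordered basis of the abelian group $G/G'\cong\Z/p^{\alpha}\times\Z/p^{\beta}$ with $\alpha\ge\beta$, the element $c=[x,y]$ generates $G'$ (a nonzero alternating form on $G/\Phi(G)\cong(\F_p)^{2}$ is automatically nondegenerate, so $[x,y]\notin(G')^{p}$), hence $m\le\beta\le\alpha$ and $x^{p^{\alpha}}=c^{p^{e_1}}$, $y^{p^{\beta}}=c^{p^{e_2}}$ for $e_1,e_2\in\{0,1,\dots,m\}$ that can be normalised by the admissible changes of generating pair. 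One then checks that the resulting tuple $(\alpha,\beta,m,e_1,e_2)$, subject to explicit inequalities and a symmetry when $\alpha=\beta$, is a complete isomorphism invariant; in particular $G$ is determined by the two abelian groups $G/G'$ and $Z(G)=\langle x^{p^{m}},y^{p^{m}},c\rangle$ together with a little extra data in the exceptional cases, including $p=2$, where the power identity $(uv)^{n}=u^{n}v^{n}[v,u]^{\binom{n}{2}}$ lets squares and commutators interfere.

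Then I would read these invariants off from $\F_pG$. Here $|G|=\dim_{\F_p}\F_pG$; the isomorphism type of $G/G'$ is an invariant of $\F_pG$ by \cite{Sandling1984}, and hence so is $m$. For the rest I would use the dimension (Jennings) subgroups $D_n(G)=G\cap(1+I(G)^{n})$: their orders and the abelian structures of the layers $D_n(G)/D_{n+1}(G)$ are determined by $\F_pG$ (Jennings), and $\F_p(G/D_n(G))$ is again determined by $\F_pG$; and for a group of class at most $2$ one has explicitly $D_n(G)=G^{p^{a(n)}}(G')^{p^{b(n)}}$. For $p$ odd this yields $D_{p^{k}}(G)=G^{p^{k}}$ for all $k$, so every order $|G^{p^{k}}|$ is an invariant of $\F_pG$; in the normal form, $G^{p^{k}}=\langle x^{p^{k}}\rangle\langle y^{p^{k}}\rangle(G')^{p^{k}}$ has order an explicit function of $(\alpha,\beta,m,e_1,e_2)$, so letting $k$ vary constrains the parameters, and the proper quotients $G/D_n(G)$ (and $G/(G')^{p^{m-1}}$) are again nonabelian $2$-generated groups of class $\le 2$ of smaller order, to which the inductive hypothesis applies. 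For $p=2$ one works instead with $D_n(G)=G^{2^{a}}(G')^{2^{b}}$ directly, correcting for the square terms.

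The hard part will be precisely this last step: separating two such groups that share the abelianisation $G/G'$ and the order $|G'|$, i.e.\ recovering how the power subgroups $G^{p^{k}}$ sit inside the cyclic group $G'$ — equivalently, the extension class of $1\to G'\to G\to G/G'\to 1$, equivalently the isomorphism type of $Z(G)$ (and, when $\alpha=\beta$ or $p=2$, a little more). This is more than the obvious quotient invariants give, and it is where the Jennings-series computation, the base cases \cite[Theorem~6.25]{Sandling1984} and \cite{Drensky1989}, and the induction on $|G|$ through the quotients $G/D_n(G)$ have to be organised with care; I expect the $p=2$ case to require separate and more delicate bookkeeping.
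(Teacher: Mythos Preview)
Your overall plan matches the paper's: parametrise the groups by a $5$-tuple (this is the paper's Proposition~\ref{Clasificacion}) and then show each entry is an invariant of $\F_pG$. The reductions to class~$2$, $2$-generated, nonabelian, and the recovery of $(m,n_1,n_2)$ from $|G|$ and $G/G'$ are correct and agree with the paper. One difference worth noting: the paper gets $s_1$ (your $e_1$) in one stroke from $\Exp(G)=p^{m+n_1-s_1}$ (Corollary~\ref{Exponente}), a known invariant of $\F_pG$; your route via $|G^{p^{k}}|$ and induction is more circuitous and you never actually check it pins $s_1$ down.

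The genuine gap is at $p=2$. You concede this case needs ``separate and more delicate bookkeeping'' but do not name a tool, and your proposed induction through the quotients $G/D_n(G)$ and $G/(G')^{p^{m-1}}$ does \emph{not} suffice. Concretely, take $p=2$, $m=n_1=n_2=2$, and compare $G=G_{2,2,2}^{1,1}$ with $H=G_{2,2,2}^{1,2}$: both tuples lie in $\mathcal A$, so $G\not\cong H$ by Proposition~\ref{Clasificacion}. One computes $D_2=\Phi$, $D_3=D_4=\langle a^{2}\rangle=(G')^{2^{m-1}}$, $D_5=1$ in both groups, and $G/\langle a^{2}\rangle\cong H/\langle a^{2}\rangle\cong G_{1,2,2}^{1,1}$; so every proper quotient you propose already coincides for $G$ and $H$, and the inductive hypothesis yields nothing. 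The paper closes exactly this gap with Passman's \emph{kernel-size} invariants: the squaring maps $f_i\colon \Delta^{2^{i-1}}/\Delta^{2^{i-1}+1}\to \Delta^{2^{i}}/\Delta^{2^{i}+1}$, $x\mapsto x^{2}$, and the cardinalities $|\{x\in\Delta/\Delta^{2}:f_i\circ\cdots\circ f_1(x)=0\}|$, which are intrinsic to $\F_2G$ and do separate the residual pairs. Without this (or an equivalent replacement) your $p=2$ argument is incomplete. As a smaller point, the assertion that $\F_p(G/D_n(G))$ is determined by $\F_pG$ is not automatic and would also need justification.
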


The basic idea consists in proving that the modular group algebra of a group satisfying the hypothesis of the theorem determines a 5-tuple of integers which in turn determines the group up to isomorphism. 
The 5-tuples belong to a certain set $\mathcal{A}$ so that there is a one-to-one correspondence between $\mathcal{A}$ and the isomorphism classes of 2-generated groups of class 2. 
That the set of isomorphism classes of such groups is in one-to-one with a certain set $\mathcal{A}'$ of 5-tuples of integers is already in the literature \cite{AMM}. However the set $\mathcal{A}'$ in this reference does not adapt to our purposes.  Figure~\ref{AdmissiblePoints} compares visually the sets $\mathcal{A}$, in this paper, and $\mathcal{A}'$, in \cite{AMM}. 
Well known results on the (MIP) readily imply that the first four entries in an element of $\mathcal{A}$ are determined by the group algebra. 
To prove the same for the fifth entry some additional arguments are needed using the Jenning series \cite{Jennings1941,Sehgal1978} and the ``kernel sizes'' \cite{Passman1965}.
Actually the Jenning series suffices to determine the fifth entry in case $p$ is odd. 
However, this does not hold for $p=2$. 

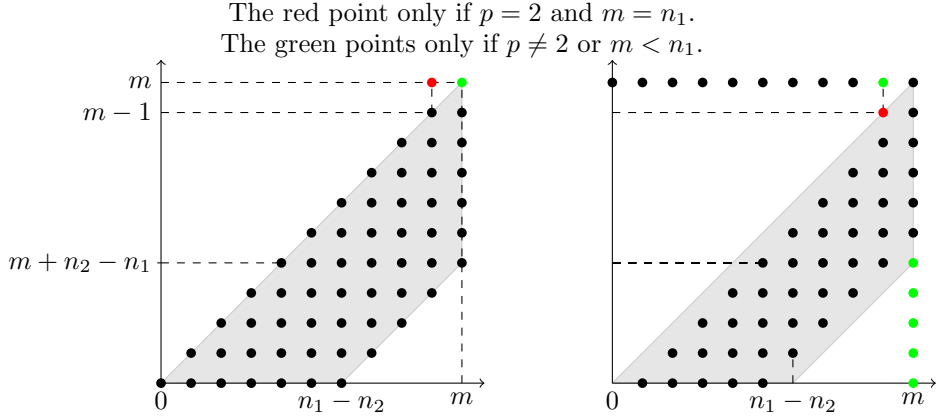
\begin{figure}[h!]
	\begin{center}
		\begin{tikzpicture}[scale=1]
		\filldraw[draw=black!20!white, fill=black!10!white] (0,0) -- (2.4,0) -- (4,1.6) -- (4,4) -- cycle;
		\draw[->] (0,0) -- (0,4.3);
		\draw[->] (0,0) -- (4.3,0);
		\draw (4,4.9) node {The red point only if $p=2$ and $m=n_1$.};
		\draw (4,4.5) node {The green points only if $p\ne 2$ or $m<n_1$.};
		%\draw (7,4.5) node {The green points only if $p\ne 2$ or $n_2<n_1$.};
		\draw (0,0) node[below] {$0$};
		\draw (0,4) node[left] {$m$};
		\draw[dashed] (0,4) -- (4,4);
		\draw (4,0) node[below] {$m$};
		\draw[dashed] (4,0) -- (4,4);
		\draw (2.4,0) node[below] {$n_1-n_2$};
		\draw (-0.05,1.6) -- (0,1.6) node[left] {$m+n_2-n_1$};
		\draw[dashed] (0,1.6) -- (1.6,1.6);
		\draw (0,3.6) node[left] {$m-1$};
		\draw[dashed] (0,3.6) -- (3.6,3.6);
		\draw[dashed] (3.6,3.6) -- (3.6,4);	
		\filldraw[thick,red] (3.6,4) circle (.05cm);		
		\draw[dashed] (6,1.6) -- (8,1.6);
		\foreach \x in {0, 0.4, 0.8, 1.2, 1.6, 2, 2.4, 2.8, 3.2, 3.6}
		\filldraw[thick] (\x,\x) circle (.05cm);		
		\draw[dashed] (6,1.6) -- (8,1.6);
		\foreach \x in {0, 0.4, 0.8, 1.2, 1.6, 2, 2.4, 2.8, 3.2,3.6}
		\filldraw[thick] (\x+0.4,\x) circle (.05cm);		
		\draw[dashed] (6,1.6) -- (8,1.6);
		\foreach \x in {0, 0.4, 0.8, 1.2, 1.6, 2, 2.4, 2.8, 3.2}
		\filldraw[thick] (\x+0.8,\x) circle (.05cm);		
		\draw[dashed] (6,1.6) -- (8,1.6);	
		\foreach \x in {0, 0.4, 0.8, 1.2, 1.6, 2, 2.4, 2.8}
		\filldraw[thick] (\x+1.2,\x) circle (.05cm);		
		\draw[dashed] (6,1.6) -- (8,1.6);	
		\foreach \x in {0, 0.4, 0.8, 1.2, 1.6, 2, 2.4}
		\filldraw[thick] (\x+1.6,\x) circle (.05cm);		
		\draw[dashed] (6,1.6) -- (8,1.6);
		\foreach \x in {0, 0.4, 0.8, 1.2, 1.6, 2}
		\filldraw[thick] (\x+2,\x) circle (.05cm);		
		\draw[dashed] (6,1.6) -- (8,1.6);		
		\foreach \x in {0, 0.4, 0.8, 1.2, 1.6}
		\filldraw[thick] (\x+2.4,\x) circle (.05cm);		
		\draw[dashed] (6,1.6) -- (8,1.6);		
%		\foreach \x in {1.6, 2, 2.4, 2.8, 3.2, 3.6, 4}
%		\filldraw[thick,green] (4,\x) circle (.05cm);		
\filldraw[thick,green] (4,4) circle (.05cm);		
		\draw[dashed] (6,1.6) -- (8,1.6);		
		
		\filldraw[draw=black!20!white, fill=black!10!white] (6,0) -- (8.4,0) -- (10,1.6) -- (10,4) -- cycle;
		\draw[->] (6,0) -- (6,4.3);
		\draw[->] (6,0) -- (10.3,0);
		
%		\draw[dashed] (10,0) -- (10,4);
		\draw (6,0) node[below] {$0$};
%		\draw (6,4) node[left] {$m$};
%		\draw[dashed] (6,4) -- (10,4) ;
		\draw (10,0) node[below] {$m$};
		\draw (8.4,0) node[below] {$n_1-n_2$};
		
		\draw[dashed] (6,1.6) -- (8,1.6);
		\draw[dashed] (8.4,0) -- (8.4,0.4);
%		\draw (6,3.6) node[left] {$m-1$};
		\draw[dashed] (6,3.6) -- (9.6,3.6);
		\draw[dashed] (9.6,3.6) -- (9.6,4);	
		\filldraw[thick,red] (9.6,3.6) circle (.05cm);		
		\draw[dashed] (6,1.6) -- (8,1.6);
		\filldraw[thick] (10,4) circle (.05cm);		
		\draw[dashed] (6,1.6) -- (8,1.6);
		\filldraw[thick,green] (9.6,4) circle (.05cm);		
		\draw[dashed] (6,1.6) -- (8,1.6);
		\foreach \x in {0, 0.4, 0.8, 1.2, 1.6, 2, 2.4, 2.8, 3.2}
		\filldraw[thick] (6+\x,4) circle (.05cm);		
		\draw[dashed] (6,1.6) -- (8,1.6);
		\foreach \x in {0, 0.4, 0.8, 1.2, 1.6, 2, 2.4, 2.8, 3.2, 3.6}
		\filldraw[thick] (\x+6.4,\x) circle (.05cm);		
		\draw[dashed] (6,1.6) -- (8,1.6);
		\foreach \x in {0, 0.4, 0.8, 1.2, 1.6, 2, 2.4, 2.8, 3.2}
		\filldraw[thick] (\x+6.8,\x) circle (.05cm);		
		\draw[dashed] (6,1.6) -- (8,1.6);	
		\foreach \x in {0, 0.4, 0.8, 1.2, 1.6, 2, 2.4, 2.8}
		\filldraw[thick] (\x+7.2,\x) circle (.05cm);		
		\draw[dashed] (6,1.6) -- (8,1.6);	
		\foreach \x in {0, 0.4, 0.8, 1.2, 1.6, 2, 2.4}
		\filldraw[thick] (\x+7.6,\x) circle (.05cm);		
		\draw[dashed] (6,1.6) -- (8,1.6);
		\foreach \x in {0, 0.4, 0.8, 1.2, 1.6, 2}
		\filldraw[thick] (\x+8,\x) circle (.05cm);		
		\draw[dashed] (6,1.6) -- (8,1.6);		
		\foreach \x in {0, 0.4, 0.8, 1.2, 1.6}
		\filldraw[thick,green] (10,\x) circle (.05cm);		
		\draw[dashed] (6,1.6) -- (8,1.6);		
		\end{tikzpicture}
	\end{center}
	\caption{\label{AdmissiblePoints} The left picture represents the pairs $(s_1,s_2)$ satisfying the conditions (A1)-(A4) in Proposition~\ref{Clasificacion} for a given prime $p$ and positive integers $m\le n_2\le n_1$. (i.e. $(m,n_1,n_2,s_1,s_2)\in \mathcal{A}$).
	The right picture represents the corresponding pairs for the parameters in \cite{AMM} (i.e. $(m,n_1,n_2,s_1,s_2)\in \mathcal{A}'$).
		Actually, in these pictures $m=10$ and $n_1-n_2=6$.}
\end{figure}

We start introducing the basic notation which is mostly standard.
We denote by $C_n$ the cyclic group of order $n$.
The order, exponent and derived subgroup of a group are denoted $|G|$, $\Exp(G)$ and $G'$ respectively.
The order of a group element $g$ is denoted $|g|$ and if $h$ is another group element of the same group then $[g,h]=g^{-1}h^{-1}gh$.
We use bar notation in a group $G$ for reduction modulo $G'$.

If $p$ is a prime integer then $v_p$ denotes the $p$-adic valuation in $\Z$, i.e. for every positive integer $n$, $v_p(n)$ is the maximum non-negative integer $k$ with $p^k\mid n$.

We consider the groups given by the following presentation for non-negative integers $m,n_1,n_2,s_1,s_2$:
    $$G_{m,n_1,n_2}^{s_1,s_2}=
    \GEN{b_1,b_2 \mid [b_2,b_1]^{p^m}=[b_i,[b_2,b_1]]=1, b_i^{p^{n_i}}=[b_2,b_1]^{p^{s_i}}, (i=1,2)}.$$
Every finite 2-generated $p$-group of class 2 with $|G'|=p^m$ and $G/G'\cong C_{p^{n_1}}\times C_{p^{n_2}}$ is isomorphic to $G_{m,n_1,n_2}^{s_1,s_2}$ for some $s_1$ and $s_2$. 
However different five tuples defines isomorphic groups of this type and the group given by the above presentation might not satisfy one of the two conditions $|G'|=p^m$ or $G/G'\cong C_{p^{n_1}}\times C_{p^{n_2}}$. 
To avoid this we only use five tuples satisfying some conditions. 
This is the goal of the following proposition.

\begin{proposition}\label{Clasificacion}
Let $p$ be a prime integer and let $\mathcal{A}$ be the set formed by the tuples of integers $(m,n_1,n_2,s_1,s_2)$ satisfying the following conditions (see Figure~\ref{AdmissiblePoints}):
\begin{itemize}
	\item[(A1)] $0<m$, $0\le s_1,s_2\le m \le n_2\le n_1$,
	\item[(A2)] $n_1-s_1\ge n_2-s_2$,
	\item[(A3)] if $p=2$ then $s_1<n_1$.
	\item[(A4)] either $s_1\ge s_2$ or $p=2$ and $n_1=n_2=m=s_2=s_1+1$.
\end{itemize}
Then every finite 2-generated $p$-group of class 2 is isomorphic to $G_{m,n_1,n_2}^{s_1,s_2}$ for a unique element $(m,n_1,n_2,s_1,s_2) \in \mathcal{A}$.

More precisely, if $G$ is a finite $p$-group of class 2 then the following conditions are equivalente:
\begin{enumerate}
\item $G\cong G_{m,n_1,n_2}^{s_1,s_2}$ with $(m,n_1,n_2,s_1,s_2)\in \mathcal{A}$.
\item $|G'|=p^m$, $G/G'\cong C_{p^{n_1}}\times C_{p^{n_2}}$ with $0<n_2\le n_1$ and $(p^{m+n_1-s_1},p^{m+n_2-s_2})$ is the maximum with respect to the lexicographical order in the following set:
	$$\{(|g_1|,|g_2|) : G/G'=\GEN{\overline{g_1}}\times \GEN{\overline{g_2}}, \text{ for } g_i \in G, 
	\text{ with } |\overline{g_i}|=p^{n_i}\; (i=1,2)\}.$$
\end{enumerate}
\end{proposition}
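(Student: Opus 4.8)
The plan is to show that the isomorphism class of a finite $2$-generated $p$-group $G$ of class $2$ determines, and is determined by, the $5$-tuple read off from a generating pair whose lift orders are lexicographically largest, and that this tuple is exactly the one described in (2).

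\textbf{Structure and realisation.} Since $G'$ is central, the commutator map is bi-multiplicative modulo $G'$; hence for any generating pair $x_1,x_2$ one gets $G'=\GEN{[x_2,x_1]}$, cyclic of order $p^m$ with $m\ge 1$, and $G/G'\cong C_{p^{n_1}}\times C_{p^{n_2}}$ with $n_1\ge n_2\ge 1$. For any pair $g_1,g_2$ with $G/G'=\GEN{\overline{g_1}}\times\GEN{\overline{g_2}}$ and $|\overline{g_i}|=p^{n_i}$, these elements generate $G$, $c:=[g_2,g_1]$ generates $G'$, $g_i^{p^{n_i}}=c^{t_i}$ for some $t_i$, and $c^{p^{n_1}}=[g_2,g_1^{p^{n_1}}]=[g_2,c^{t_1}]=1$ while $c^{p^{n_2}}=[g_2^{p^{n_2}},g_1]=[c^{t_2},g_1]=1$; so $m\le n_2$, and as $n_i\ge m$ the element $g_i^{p^{n_i}}$, hence $|g_i|=p^{\,n_i+m-v_p(t_i)}$, depends only on the decomposition. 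Put $s_i:=v_p(t_i)$, or $s_i:=m$ if $t_i=0$. Replacing $(g_1,g_2)$ by $(g_1^{\alpha},g_2^{\beta})$ for suitable units $\alpha,\beta$ turns the relations into $g_i^{p^{n_i}}=[g_2^{\beta},g_1^{\alpha}]^{p^{s_i}}$, so by von Dyck's theorem $G_{m,n_1,n_2}^{s_1,s_2}$ surjects onto $G$. Conversely the defining relations force $|G_{m,n_1,n_2}^{s_1,s_2}|\le p^{m+n_1+n_2}$ in any group, and for every tuple with $m\le n_2\le n_1$ and $0\le s_i\le m$ this bound is attained: the central extension of $C_{p^{n_1}}\times C_{p^{n_2}}$ by $C_{p^m}$ with $2$-cocycle $\big((x_1,x_2),(y_1,y_2)\big)\mapsto x_2y_1+p^{s_1}\lfloor(x_1+y_1)/p^{n_1}\rfloor+p^{s_2}\lfloor(x_2+y_2)/p^{n_2}\rfloor$ (on representatives in $[0,p^{n_i})$) is such a group, the cocycle identity holding precisely because $m\le n_2\le n_1$. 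Hence $|G_{m,n_1,n_2}^{s_1,s_2}|=p^{m+n_1+n_2}$ for all such tuples, the surjection above is an isomorphism, every $G$ is isomorphic to some $G_{m,n_1,n_2}^{s_1,s_2}$, and isomorphic such groups share $m,n_1,n_2$.

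\textbf{The order pairs of a fixed group.} Fix $m\le n_2\le n_1$, set $G=G_{m,n_1,n_2}^{s_1,s_2}$ with generators $b_1,b_2$ and $c=[b_2,b_1]$, and note that a decomposition of $G/G'$ as above lifts to a generating pair obtained from $b_1,b_2$ by an integral $2\times2$ matrix invertible modulo $p$ and satisfying the divisibility needed so the images have orders $p^{n_1}$ and $p^{n_2}$ (for $n_1>n_2$: both diagonal entries are units and $p^{n_1-n_2}$ divides the lower-left entry; for $n_1=n_2$: any matrix of $\GL_2(\Z/p^n\Z)$). Expanding the $p^{n_i}$-th powers of such lifts via the class-$2$ identity $(xy)^k=x^ky^k[y,x]^{\binom{k}{2}}$, using $b_i^{p^{n_i}}=c^{p^{s_i}}$ and $v_p\binom{p^k}{2}=k$ for $p$ odd, $=k-1$ for $p=2$, yields closed formulas for the orders of the lifts; the set of pairs so obtained is precisely the one in condition (2).

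\textbf{Matching $\mathcal{A}$ with lexicographic maximality.} Two complementary checks finish the proof. (i) If $(s_1,s_2)$ corresponds to the lexicographically largest pair of lift orders then $(m,n_1,n_2,s_1,s_2)\in\mathcal{A}$: (A1) is the first step; for (A2) and (A4), whenever the required inequality fails one passes to another generating pair (swapping the two, or altering one by a power of the other) and the formulas show the relevant order strictly increases while the decomposition stays valid, the \emph{only} exception being $p=2$ with $n_1=n_2=m$, where the surviving term $c^{p^{m-1}}$ obstructs the reduction and produces exactly the configuration allowed in (A4); (A3) holds because otherwise the tuple is $(m,m,m,m,m)$, whose group for $p=2$ is isomorphic (via $b_1\mapsto b_1b_2$) to $G_{m,m,m}^{m-1,m}$ and hence not maximal. (ii) If $(m,n_1,n_2,s_1,s_2)\in\mathcal{A}$ then $(p^{m+n_1-s_1},p^{m+n_2-s_2})$ is the lexicographic maximum: the formulas give that the $p$-valuation $A_1$ of the first exponent always satisfies $A_1\ge s_1$, and that $A_1=s_1$ forces the $p$-valuation $A_2$ of the second to satisfy $A_2\ge s_2$, using (A2), (A4) and the binomial valuations. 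By (i) every isomorphism class meets $\mathcal{A}$ and equals $G_{m,n_1,n_2}^{s_1,s_2}$ for the tuple of its maximal decomposition; by (ii) that tuple is recovered intrinsically as the maximiser; together with ``isomorphic groups share $m,n_1,n_2$'' this gives the bijection with $\mathcal{A}$, uniqueness, and the equivalence $(1)\Leftrightarrow(2)$. (One could instead invoke the classification count of \cite{AMM} for injectivity once surjectivity onto $\mathcal{A}$ is established, but step (ii) is in any case needed for $(1)\Rightarrow(2)$.)

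\textbf{Main obstacle.} The technical heart is the case analysis for $p=2$: there $v_2\binom{p^k}{2}=k-1$ rather than $k$, so collecting a product of two generators produces an extra central element of order $2$; this is what breaks the naive normalisation when $n_1=n_2=m$, forcing the exceptional family in (A4) and the exclusion (A3). Keeping track of which integral matrices give admissible changes of generating pair when $n_1>n_2$ is the other point requiring care.
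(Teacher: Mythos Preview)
Your outline is correct and follows essentially the same strategy as the paper: establish that every such group is a $G_{m,n_1,n_2}^{s_1,s_2}$ of the right order, then prove the equivalence $(1)\Leftrightarrow(2)$ by showing that a lexicographically maximal generating pair yields a tuple in $\mathcal{A}$ and, conversely, that for a tuple in $\mathcal{A}$ the defining generators $(b_1,b_2)$ realise the lexicographic maximum. The paper carries out the second direction by contradiction (assuming some $(c_1,c_2)$ beats $(b_1,b_2)$ and pushing through the same $p=2$ case analysis you allude to), whereas you phrase it as a direct valuation bound, but the underlying computations are identical.

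One small point worth noting: your $2$-cocycle, which explicitly incorporates the carry terms $p^{s_i}\lfloor\cdot\rfloor$, genuinely realises $G_{m,n_1,n_2}^{s_1,s_2}$ for the given $s_1,s_2$, whereas the paper's cocycle $a^{x_2y_1}$ as written encodes only the commutator and would literally produce $G_{m,n_1,n_2}^{m,m}$; the paper uses it only to pin down $|G|$, so the argument still goes through, but your version is cleaner on this step. Your matrix parametrisation of admissible generating pairs is likewise just a tidier bookkeeping of the paper's ad hoc moves $(b_1,b_2)\mapsto(b_1b_2^t,b_2)$, $(b_1,b_1^tb_2)$, $(b_2,b_1)$, etc.
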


\begin{proof}
We fix $G$ a finite $p$-group of class 2.
As $G'$ is central in $G$, for every $x,y,z\in G$ and every positive integer $n$ we have 
$$[xy,z]=[x,z][y,z], [x,yz]=[x,y][x,z] \qand  (xy)^n = [y,x]^{\frac{n(n-1)}{2}} x^ny^n.$$
We will use these formulas without specific mention. 
Moreover, by the Burnside Basis Theorem \cite[5.3.2]{Robinson1982}  condition (2) implies that $G$ is 2-generated.
Clearly, if condition (1) holds then $G$ is 2-generated and hence, in the remainder of the proof, we assume that $G$ is $2$-generated. 
The first statement of the proposition is an obvious consequence of the second, so we only have to prove that conditions (1) and (2) are equivalent.

We first prove that (2) implies (1). So we suppose that $|G'|=p^m$, $G/G'\cong C_{p^{n_1}} \times C_{p^{n_2}}$ with $n_1\ge n_2$, and $(p^{o_1},p^{o_2})=(p^{m+n_1-s_1},p^{m+n_2-s_2})$ is the maximum with respect to the lexicographical order in the following set $\{(|g_1|,|g_2|) : (g_1,g_2)\in \G\}$, where 
	\begin{equation}\label{GDef}
	\G=\{(g_1,g_2) \in G\times G: G/G'=\GEN{\overline{g_1}}\times \GEN{\overline{g_2}} \text{ and } |\overline{g_i}|=p^{n_i}\; (i=1,2)\}.
	\end{equation}
Observe that since $G$ is not abelian and $G'$ is central we have $m>0$ and $G/G'$ is $2$-generated but not cyclic. 

We will prove that $(m,n_1,n_2,s_1,s_2)\in \mathcal{A}$ and $G\cong G_{m,n_1,n_2}^{s_1,s_2}$. 
First of all, if $(g_1,g_2)\in \G$ then $[g_2,g_1]^{p^{n_2}}=[g_2^{p^{n_2}},g_1]=1$ and  $G'=\GEN{[g_2,g_1]}$. Thus $m\le n_2\le n_1$.

We will use several times that if $(g_1,g_2)\in \G$ and $t$ is an integer then 
\begin{itemize}
	\item $(g_1g_2^t,g_2)\in \G$, 
	\item if $p\nmid t$ then $(g_1^t,g_2)\in\G$ and $(g_1,g_2^t)\in \G$,
	\item if $p^{n_1-n_2}\mid t$ then $(g_1,g_1^tg_2)\in \G$ and 
	\item if $n_1=n_2$ then $(g_2,g_1)\in \G$.
\end{itemize}

Let 
$$\B=\{(b_1,b_2)\in \G : |b_i|=p^{o_i}\; (i=1,2)\}.$$
Fix $(b_1,b_2)\in \B$ and set $a=[b_2,b_1]$. 
Then $b_i^{p^{n_i}}=a^{t_i}$ for a unique integer $t_i$ with $1\le t_i\le p^m$.
Then $|b_i|=p^{m+n_i-v_p(t_i)}$ and therefore $s_i=v_p(t_i)$.
Therefore $0\le s_i \le m$. 
This proves (A1).

As $(b_1b_2,b_2)\in \G$, by the maximality of $(o_1,o_2)$ we have
	$$1=(b_1b_2)^{p^{o_1}} = a^{\frac{p^{o_1}(p^{o_1}-1)}{2}}b_j^{p^{o_1}}.$$
We claim that $a^{\frac{p^{o_1}(p^{o_1}-1)}{2}}=1$.
Otherwise, as, $m\le n_2\le n_1\le o_1$, necessarily $p=2$ and $o_1=m$. Thus $n_1=n_2=m$ and hence $(b_2,b_1)\in \G$. 
Using the maximality of $(o_1,o_2)$ we deduce that $o_1\ge o_2$.
Hence $b_2^{p^{o_1}}=1$ and so $a^{\frac{p^{o_1}(p^{o_1}-1)}{2}}=1$, as desired. 
We conclude that $n_1-s_1=o_1-m\ge o_2-m=n_2-s_2$. 
Moreover, if $p=2$ then $m+n_1-s_1=o_1>m$, i.e. $s_1<n_1$.
This proves (A2) and (A3).

In order to prove (A4) we assume that $s_1<s_2$. 
We have to show that $p=2$, $n_1=n_2=m=s_2$ and $s_1=m-1$.
We have $o_2+n_1-n_2=m+n_1-s_2<m+n_1-s_1=o_1$ and hence $b_1^{p^{o_2+n_1-n_2}}\ne 1$.
Moreover, $(b_1,b_1^{p^{n_1-n_2}} b_2)\in \G$ and hence, from the maximality of $(o_1,o_2)$ we have 
$$1=(b_1^{p^{n_1-n_2}}b_2)^{p^{o_2}} = 
a^{p^{n_1-n_2}\frac{p^{o_2}(p^{o_2}-1)}{2}} 
b_1^{p^{o_2+n_1-n_2}}.$$
Thus $a^{p^{n_1-n_2}\frac{p^{o_2}(p^{o_2}-1)}{2}}\ne 1$.
By (A1), we have $m\le n_2\le o_2$ and hence $p=2$ and $n_1=n_2=o_2=m$. 
Therefore $s_2=m$ and $a^{2^{m-1}(2^{m}-1)}=b_1^{2^{o_2}}=a^{2^{m-1}}$. 
Thus $o_1=o_2+1=m+1$ and therefore $s_1=m-1$. 
This finishes the proof of (A1)-(A4). 

We now prove that we may take $(b_1,b_2)\in \B$ so that $b_i^{p^{n_i}}=[b_2,b_1]^{p^{s_i}}$ for $i=1,2$. 
Indeed, first write $t_i=u_ip^{s_i}$ and take $c_i=b_i^{u_i}$. 
As $s_i=v_p(t_i)$ we have $p\nmid u_i$ and hence $(c_1,c_2)\in \B$ and 
$c_i^{p^{n_i}} = a^{p^{s_i}}$. 
However, now $[c_2,c_1]=a^{u_1u_2}$. Let $q$ be an integer such that $qu_1u_2\equiv 1 \mod p^m$ and take $B_i=c_i^q$ for $i=1,2$. Then 
    $$[B_2,B_1]=[c_2,c_1]^{q^2} = a^{q^2u_1u_2}=a^q$$
and 
	$$B_i^{p^{n_i}} = a^{qp^{s_i}}=[B_2,B_1]^{p^{s_i}}.$$
So replacing $(b_1,b_2)$ by $(B_1,B_2)$ we have the desired conclusion. 

Therefore $\mathcal{B}$ contains an element $(b_1,b_2)$ satisfying the relations in the presentation of $G_{m,n_1,n_2}^{s_1,s_2}$. Thus $G$ is an epimorphic image of $G_{m,n_1,n_2}^{s_1,s_2}$. As $|G|=p^{m+n_1+n_2} \le |G_{m,n_1,n_2}^{s_1,s_2}|$ we deduce that $G\cong G_{m,n_1,n_2}^{s_1,s_2}$. This finishes the proof of (2) implies (1).

To prove (1) implies (2) it is enough to show that if $G=G_{m,n_1,n_2}^{s_1,s_2}$ then $G/G'\cong C_{p^{n_1}}\times C_{p^{n_2}}$, $|G'|=p^m$, $|b_i|=p^{o_i}$ with $o_i=m+n_i-s_i$ for $i=1,2$ and $(p^{o_1},p^{o_2})$ is the maximum with respect of the lexicographical order in the set $\{(|c_1|,|c_2|):(c_1,c_2)\in\G\}$ where $\G$ is as in \eqref{GDef}. 
The first is clear because the presentation of $G$ yields the following presentation of its abelianizer: 
	$$G/G'\cong \GEN{\overline{b_1},\overline{b_2}:[\overline{b_2},\overline{b_1}]=1, \overline{b_i}^{p^{n_i}}=1 (i=1,2)} \cong C_{p^{n_1}}\times C_{p^{n_2}}.$$

Let $A=\GEN{a}$ be a cyclic group of order $p^m$ and consider the map $f:G/G'\rightarrow A$ given by 
$$f\left(\overline{b_1}^{x_1}\overline{b_2}^{x_2},\overline{b_1}^{y_1}\overline{b_2}^{y_2}\right) = 
a^{x_2y_1}.$$
A straightforward calculation shows that $f$ is a well defined $2$-cocycle and that central extension of $G/G'$ by $A$ associated to $f$ is isomorphic to $G_{m,n_1,n_2}^{s_1,s_2}$. Thus $|G|=p^{m+n_1+n_2}$ and hence $|G'|=p^m$. Now it is clear that $|b_i|=p^{o_i}$. 

It remains to prove that $(p^{o_1},p^{o_2})$ is greater or equal than $(|c_1|,|c_2|)$ with respect to the lexicographical order for every $(c_1,c_2)\in \G$. By means of contradiction let $(c_1,c_2)\in \G$ with $|c_l|>|b_l|$ for some $l\in \{1,2\}$ and if $l=2$ then $|c_1|=|b_1|$.
Clearly $|b_i|=p^{o_i}$ with $o_i=m+n_i-s_i$.
Set $a=[b_2,b_1]$ and fix integers $0\le y_i<p^m$, $0\le x_{i1}<p^{n_1}$ and $0\le x_{i2}<p^{n_2}$ with 
	$$c_i=a^{y_i}b_1^{x_{i1}}b_2^{x_{i2}} \quad (i=1,2).$$
Observe that $p^{n_1-n_2}\mid x_{21}$ because $|\overline{c_2}|=p^{n_2}$.
Thus $v_p(x_{21})\ge n_1-n_2$.
Moreover, as $|c_l|>|b_l|=p^{o_l}$, we have
	\begin{equation}\label{cs}
	1\ne c_l^{p^{o_l}} = a^{y_lp^{o_l}} \cdot 
	a^{x_{l1}x_{l2} \frac{p^{o_l} (p^{o_l}-1)}{2}}\cdot b_1^{x_{l1}p^{o_l}}b_2^{x_{l2}p^{o_l}}.
	\end{equation} 
By (A1), we have $s_l\le n_l$ and hence $m\le o_l$. Thus $a^{y_lp^{o_l}}=1$. 
Moreover, $b_l^{x_{ll}p^{o_l}}=b_2^{x_{l2}p^{o_1}}=1$ because $o_1=m+n_1-s_1\ge m+n_2-s_2=o_2$. Thus $b_2^{x_{l2}p^{o_l}}=1$. 
Combining this with \eqref{cs}, we get
 \begin{equation}\label{ab}
 a^{-x_{l1}x_{l2} \frac{p^{o_l} (p^{o_l}-1)}{2}}\ne b_1^{x_{l1}p^{o_l}}.
 \end{equation}

Suppose that $b_1^{x_{l1}p^{o_l}}\ne 1$. Then $l=2$ and $m+n_1-s_2 \le o_2+v_p(x_{21}) < o_1 = m+n_1-s_1$. Therefore $s_1<s_2$ and hence the second alternative of (A4) holds, i.e. that $p=2$, $m=n_1=n_2=s_2$ and $o_1=m+1$ and hence $v_2(x_{21})=0$. 
This proves the following
\begin{equation}\label{Bs}
b_1^{x_{l1}p^{o_l}} = \begin{cases} a^{2^{m-1}}, & 
\text{if } l=p=2, m=n_1=n_2=s_2=s_1+1 \text{ and } 2\nmid x_{l1};
\\
1, & \text{otherwise}.\end{cases}
\end{equation}
On the other hand
\begin{equation}\label{As}
a^{x_{l1}x_{l2} \frac{p^{o_l} (p^{o_l}-1)}{2}}=
\begin{cases}
a^{2^{m-1}} & \text{ if } p=2, n_l=s_l \text{ and } 2\nmid x_{l1}x_{l2}; \\
1, & \text{otherwise}.
\end{cases}
\end{equation}
Therefore, if $p\ne 2$ or $n_l\ne s_l$, then $c_l^{p^{o_l}}=1$, a contradiction, with \eqref{cs}.
Hence $p=2$ and $n_l=s_l$, i.e. $o_l=m$. 

We claim that $s_1<s_2$.
This follows from \eqref{Bs} if $b_1^{x_{l1}p^{o_l}}\ne 1$. 
Otherwise $a^{x_{l1}x_{l2} \frac{2^{o_l} (2^{o_l}-1)}{2}}\ne 1$. 
Then, by \eqref{As}, $p=2$ and $2\nmid x_{l1}x_{l2}$ and $n_l=s_l$. 
As $s_1<n_1$, by (A3), necessarily $l=2$ and as $0\le n_1-n_2\mid v_2(x_{l1})=0$, it follows that $s_1<n_1=n_2=s_2$, as desired.

Thus, by (A4), $p=2$ and $m=n_1=n_2=s_2=s_1+1$. 
Combining this with \eqref{ab}, \eqref{Bs} and \eqref{As} we conclude that $l=2$ and
	$$a^{2^{m-1}x_{21}x_{22}} = a^{-x_{21}x_{22} \frac{p^{o_2} (p^{o_2}-1)}{2}}\ne b_1^{x_{21}p^{o_2}}b_2^{x_{22}p^{o_2}}=a^{2^{m-1}x_{21}}.$$
Thus $x_{21}$ is odd and $x_{22}$ is even. 
As $(c_1,c_2)\in \G$, necessarily $x_{12}$ is odd. 
By assumption $|c_1|=|b_1|=2^{m+1}$. 
Therefore 
	$$1\ne c_1^{2^m} = a^{x_{11}x_{12} 2^{m-1}(2^m-1)+2^{m-1}x_{11}}=a^{x_{12}2^m}=1$$
which is the desired contradiction. This finishes the proof of the proposition.
\end{proof}

\begin{corollary}\label{Exponente}
If $(m,n_1,n_2,s_1,s_2)\in \mathcal{A}$ then the exponent of $G_{m,n_1,n_2}^{s_1,s_2}$ is $p^{m+n_1-s_1}$.
\end{corollary}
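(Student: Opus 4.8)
The plan is to exploit the data already extracted in Proposition~\ref{Clasificacion}. Writing $G=G_{m,n_1,n_2}^{s_1,s_2}$ and $a=[b_2,b_1]$, the proof of the implication (1)$\Rightarrow$(2) there shows that $|G'|=p^m$, that $|b_i|=p^{o_i}$ with $o_i=m+n_i-s_i$, and (by (A1)--(A2)) that $o_1\ge o_2\ge m$. In particular $\Exp(G)\ge p^{o_1}=p^{m+n_1-s_1}$, so the whole content is the reverse inequality $\Exp(G)\mid p^{o_1}$.

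To get that, I would take an arbitrary element of $G$. Since $G/G'=\GEN{\overline{b_1}}\times\GEN{\overline{b_2}}$ and $G'=\GEN{a}$ is central, every element has the form $c=a^{y}b_1^{x_1}b_2^{x_2}$ for integers $y,x_1,x_2$. Applying the class-$2$ power formula $(xy)^n=[y,x]^{n(n-1)/2}x^ny^n$ repeatedly gives
\begin{equation*}
c^{p^{o_1}}=a^{yp^{o_1}}\cdot a^{x_1x_2\frac{p^{o_1}(p^{o_1}-1)}{2}}\cdot b_1^{x_1p^{o_1}}\cdot b_2^{x_2p^{o_1}}.
\end{equation*}
I would then kill the factors one by one: $a^{yp^{o_1}}=1$ because $a^{p^m}=1$ and $o_1\ge m$; $b_1^{x_1p^{o_1}}=1$ since $|b_1|=p^{o_1}$; and $b_2^{x_2p^{o_1}}=1$ since $|b_2|=p^{o_2}$ with $o_2\le o_1$. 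The only delicate term is the ``twist'' $a^{x_1x_2 p^{o_1}(p^{o_1}-1)/2}$: when $p$ is odd, $p^{o_1}\mid p^{o_1}(p^{o_1}-1)/2$ and $o_1\ge m$, so it is $1$; when $p=2$, one has $v_2\!\left(2^{o_1}(2^{o_1}-1)/2\right)=o_1-1$, and condition (A3) says $s_1<n_1$, i.e. $o_1=m+n_1-s_1>m$, so $o_1-1\ge m$ and again the term vanishes. Hence $c^{p^{o_1}}=1$ for every $c\in G$, so $\Exp(G)\mid p^{o_1}$, and combining with the lower bound we get $\Exp(G)=p^{o_1}=p^{m+n_1-s_1}$.

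The only real obstacle is this last term in the $p=2$ case: without (A3) the group could have an element of order $2^{o_1}\cdot 2=2^{m+n_1-s_1+1}$ larger than $|b_1|$, which is exactly the phenomenon described by the ``red point'' in Figure~\ref{AdmissiblePoints} and the exceptional clause of (A4). Since (A3) is part of the hypothesis $(m,n_1,n_2,s_1,s_2)\in\mathcal{A}$, this case does not arise here, and everything else is a routine application of the class-$2$ identities already recalled in the proof of Proposition~\ref{Clasificacion}.
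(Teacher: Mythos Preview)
Your proof is correct and follows essentially the same route as the paper: write an arbitrary element as $a^{y}b_1^{x_1}b_2^{x_2}$, raise it to the $p^{o_1}$-th power via the class-$2$ formula, and kill the four factors using $o_1\ge o_2\ge m$ together with (A3) for the troublesome $p=2$ twist term. Your treatment of the twist term is in fact more explicitly written out than the paper's, which simply asserts that it vanishes ``because, by (A2) and Proposition~\ref{Clasificacion}, $|b_2|\le|b_1|$, and, by (A3), either $s_1<n_1$ or $p\ne 2$.''
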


\begin{proof}
As $|b_1|=p^{m+n_1-s_1}$ we have to show that if $x\in G_{m,n_1,n_2}^{s_1,s_2}$ then $x^{p^{m+n_1-s_1}}=1$.
Let $a=[b_2,b_1]$. 
Then $x=a^ib_1^jb_2^k$ for integers $0\le i<p^m, 0\le j<p^{n_1}$ and $0\le k < p^{n_2}$. 
Therefore
\begin{eqnarray*}
x^{p^{m+n_1-s_1}}&=&a^{ip^{m+n_1-s_1}+jk\frac{p^{m+n_1-s_1}(p^{m+n_1-s_1}-1)}{2}}
b_1^{jp^{m+n_1-s_1}} b_2^{kp^{m+n_1-s_1}} \\
&=& a^{jk\frac{p^{m+n_1-s_1}(p^{m+n_1-s_1}-1)}{2}}=1	
\end{eqnarray*}
because, by (A2) and Proposition~\ref{Clasificacion}, we have $|b_2|=p^{m+n_2-s_2}\le p^{m+n_1-s_1} = |b_1|$, and, by (A3), either $s_1<n_1$ or $p\ne 2$.
\end{proof}

We are ready for the proof of the main result. 

\medskip
\begin{proofof}\emph{Theorem}~\ref{Main}.
As the result is known if $G$ is abelian we may assume that $G$ and $H$ are not abelian. From the assumption $\F_p G\cong \F_p H$ it follows that $|G|=|H|$, $G/G'\cong H/H'$ and $\Exp(G)=\Exp(H)$ (see \cite{Ward, Passman1965, Sandling1984, Kulshammer1982, Sandling1996}. Moreover, as $G$ is 2-generated and of class two then so is $H$. Indeed, the latter is proved in \cite{BaginskiKonovalov} and the former is a consequence of Propositions~1.14 and 1.15 in \cite{Sehgal1978} combined with the Burnside Basis Theorem \cite[5.3.2]{Robinson1982}.

By Proposition~\ref{Clasificacion}, there are $(m_1,n_1,n_2,s_1,s_2),(m'_1,n'_1,n'_2,s'_1,s'_2)\in \mathcal{A}$ such that $G\cong G_{m_1,n_1,n_2}^{s_1,s_2}$ and $H\cong G_{m'_2,n'_2,n'_2}^{s'_1,s'_2}$. 
Moreover, $p^{m_1+n_1+n_2}=|G|=|H|=p^{m'_1+n'_1+n'_2}$,
$C_{p^{n_1}}\times C_{p^{n_2}} \cong G/G' \cong H/H' \cong C_{p^{n'_1}}\times C_{p^{n'_2}}$ and as $n_1\ge n_2$ and $n'_1\ge n'_2$ it follows that $n_1=n_1'$, $n_2=n'_2$ and $m=m'$. 
Furthermore, by Corollary~\ref{Exponente} we have $p^{m_1+n_1-s_1}=\Exp(G)=\Exp(H)=p^{m_1+n_1-s'_1}$, so that $s_1=s'_1$. 

It remains to prove that $s_2=s'_2$.
To this end we first compute the dimension subgroups $M_n(G)$ of $G$. 
Recall that if $\gamma_i(G)$ denotes the $i$-th term of the minimal central series of $G$ and $n$ is a positive integer then $M_n(G)=\prod_{ip^j\ge n}\gamma_i(G)^{p^j}$. (Here, $X^u$ denotes the group generated by the elements of the form $x^u$ with $x\in X$, for $X\subseteq G$ and $u$ a positive integer.) 
As $G$ is of class $2$, we have 
    \begin{equation}\label{Dimension}
     M_n(G)=\begin{cases}
        G, & \text{if } n=1; \\
        {G'}^{p^k} G^{p^{k+1}} = \GEN{a^{p^k},b_1^{p^{k+1}},b_2^{p^{k+1}}}, & \text{if } p^k < n \le 2p^k; \\
        G^{p^{k+1}} = \GEN{a^{p^{k+1}},b_1^{p^{k+1}},b_2^{p^{k+1}}}, & \text{if } 2p^k < n \le p^{k+1}.
          \end{cases}
    \end{equation}
In particular, if $M_n(G)/M_{n+1}(G)\ne 1$ then $n$ is either $p^k$ or $2p^k$ for some non-negative integer $k$.  

Each quotient $M_i(G)/M_{i+1}(G)$ is an elementary abelian $p$-group and, by \cite[Theorems~3.6 and 5.5]{Jennings1941}, 
$[M_i(G):M_{i+1}(G)]=[M_i(H):M_{i+1}(H)]$ for every $i\ge 1$. Furthermore, if $\Delta(G)$ denotes the augmentation ideal of $\F_pG$ and $x_1,\dots,x_r$ is a minimal generating set of $M_i(G)/M_{i+1}(G)$ then $(1+x_1)+\Delta(G)^{i+1},\dots,(1+x_r)+\Delta(G)^{i+1}$ is a basis of $\Delta(G)^i/\Delta(G)^{i+1}$ as vector space over $\F_p$.

For $k\ge 0$ define $d_k$ by the following equality:
    $$p^{d_k} = [M_{2p^k}(G):M_{2p^k+1}(G)] = [M_{2p^k}(H):M_{2p^k+1}(H)].$$
By \eqref{Dimension} we have 
	$$p^{d_k} = \begin{cases}
	\left[\GEN{a^{p^k},b_1^{p^{k+1}},b_2^{p^{k+1}}}:
	\GEN{a^{p^{k+1}},b_1^{p^{k+1}},b_2^{p^{k+1}}}\right], & \text{if } p\ne 2; \\\\
	\left[\GEN{a^{p^{k}},b_1^{p^{k+1}},b_2^{p^{k+1}}}:
	\GEN{a^{p^{k+1}},b_1^{p^{k+2}},b_2^{p^{k+2}}}\right], & \text{if } p=2; \\
	\end{cases}$$    

Suppose first that $p\ne 2$ and let $u = \min\{i\ge 1 : d_i=0\}$.
We claim that $u=s_2$. 
First observe that $a^{p^{s_2}}=b_2^{p^{n_2}}\in \GEN{b_2^{p^{s_2+1}}}$ for otherwise $s_2\ge n_2$, and as $s_2\le m \le n_2$, we have $a^{p^{s_2}}=1$.
Therefore $u\le s_2$. 
To prove equality we take $i<s_2$ and we have to show that $a^{p^i}\not\in M_{2p^i+1}(G)$. Otherwise, as $\GEN{b_1,b_2}\cap \GEN{a}=\GEN{a^{p^{s_2}}}$, we have $a^{p^i} \in \GEN{a^{p^{i+1}},a^{p^{s_2}}}=\GEN{a^{p^{i+1}}}$. 
This only holds if $i\ge m$ which is not the case because $i<s_2\le m$.
This proves the result for the case where $p\ne 2$ because the same argument shows that $u$ has to be also equal to $s_2'$.

In the remainder of the proof we assume that $p=2$ and let $u = \min\{i\ge 1 : d_i<3\}$.
We claim that $u=\min(n_2-1,s_1,s_2)$. 
Firstly, as $p=2$, we have $s_1<n_1$, by (A3), and therefore $a^{p^{s_1}}=b_1^{p^{n_1}} \in \GEN{b_1^{p^{s_1+1}}}$. Thus $d_{s_1}<3$ and hence $u\le s_1$. 
Secondly, $a^{p^{s_2}}\in \GEN{b_2^{p^{n_2+1}}}$, because if $s_2<n_2$ then $a^{p^{s_2}}=b_2^{n_2}\in \GEN{b_2^{p^{s_2+1}}}$, and otherwise $s_2=m=n_2$ so that $a^{p^{s_2}}=1$. Therefore $d_{s_2}<3$ and hence $u\le s_2$.
Finally, if $s_2<n_2$ then $n_2-1\ge m$ and hence $a^{p^{n_2-1}}=1$. Otherwise, i.e. if $s_2\ge n_2$ then $s_2=m=n_2$, so that $b_2^{n_2}=a^{p^{s_2}}=1$. Therefore $d_{n_2-1}<3$ and hence $u\le n_2-1$.
This proves that $u\le \min(s_1,s_2,n_2-1)$. 
To prove that the equality holds we have to show that if $0\le k < \min(n_2-1,s_1,s_2)$ then $d_k=3$.
Indeed, let $1\le k < \min(n_2-1,s_1,s_2)$ and let $x,y,z$ be integers such that $a^{2^kx}b_1^{2^{k+1}y}b_2^{2^{k+1}z}\in \GEN{a^{2^{k+1}},b_1^{2^{k+2}},b_1^{2^{k+2}}}$. 
Then 
    $$a^{2^kx}b_1^{2^{k+1}y}b_2^{2^{k+1}z}=a^{2^{k+1}x_1}b_1^{2^{k+2}y_1}b_2^{2^{k+2}z_1}$$
for some integers $x_1,y_1$ and $z_1$. 
Hence 
    $$b_1^{2^{k+1}(y-2y_1)} b_2^{2^{k+1}(z-2z_1)}\in \GEN{a}$$
and therefore $2^{n_1}\mid 2^{k+1}(y-2y_1)$ and $2^{n_2}\mid 2^{k+1}(z-2z_1)$. As $k+1<n_2\le n_1$ it follows that $y$ and $z$ are even. Furthermore 
	$$a^{2^k(2x_1-x)} = b_1^{2^{n_1}\alpha} b_2^{2^{n_2}\beta} = a^{\alpha s_1+\beta s_2}$$
for some integers $\alpha$ and $\beta$. 
As $k<\min(s_1,s_2)\le m$ it follows that $x$ is even too.
This shows that the elements of the form $a^{2^kx}b_1^{2^{k+1}y}b_2^{2^{k+1}z}$ with $0\le x,y,z\le 1$ are all different and therefore $2^{d_k}=|M_{2^k}(G)/M_{2^{k}+1}(G)|=8$, i.e. $d_k=3$, as desired. 
This finishes the proof of the claim.

Of course the same argument shows that $u=\min(n_2-1,s_1,s'_2)$. 

So we have $u=\min(n_2-1,s_1,s_2)=\min(n_2-1,s_1,s'_2)$. 
If $s_2<\min(n_2-1,s_1)$ then $s_2=u=\min(n_2-1,s_1,s_2')=s'_2$.
Similarly, if $s'_2<\min(n_2-1,s_1)$ then $s_2=s'_2$. 
Thus we may assume that $\min(n_2-1,s_1)\le s_2< s_2'$. 
We claim that $n_2-1\le s_1$. Otherwise $s_1<s_2'$ and hence by condition (A4), applied to $(m,n_1,n_2,s_1,s'_2)$, we deduce that $n_2=s_1+1$, so that $n_2-1\le s_1$, as desired. Therefore $u=n_2-1\le s_2<s_2'\le m \le n_2$, by condition (A1), so that
	$$s_2'=m=n_2 \qand u=s_2=m-1\le s_1\le m.$$

We now use an argument from \cite{Passman1965} (see also \cite{Jennings1941}) which eventually will yield a contradiction with the previous equalities. 
Let $\Delta=\Delta(G)$ and for each $i\ge 1$ consider the natural map $f_i:\Delta^{2^{i-1}}/\Delta^{2^{i-1}+1} \rightarrow \Delta^{2^i}/\Delta^{2^i+1}$ given by $f_i(x)=x^2$. 
One defines similarly maps $g_i:\Delta_1^{2^{i-1}}/\Delta_1^{2^{i-1}+1} \rightarrow \Delta_1^{2^i}/\Delta_1^{2^i+1}$ where $\Delta_1=\Delta(H)$. 
The isomorphism $\F_2G\cong \F_2H$ implies that the sets 
	$$X_{i,G}=\{x\in \Delta/\Delta^2 : f_i f_{i-1} \dots f_1(x)=0\}$$
and
	$$X_{i,H}=\{x\in \Delta/\Delta^2 : g_i g_{i-1} \dots g_1(x)=0\}$$ 
have the same cardinality.

Observe that $\{1+b_1+\Delta^2,1+b_2+\Delta^2\}$ is a basis of $\Delta/\Delta^2$ and  $\{1+a^{2^{i-1}}+\Delta^{2^{i}+1},1+b_1^{2^{i}}+\Delta^{2^{i}+1},1+b_2^{2^{i}}+\Delta^{2^{i}+1}\}$ generates $\Delta^{2^{i}}/\Delta^{2^{i}+1}$, as vector space over $\F_2$, for every $i\ge 1$. 
Using the formula 
    $$(x-1)(y-1) \equiv (y-1)(x-1)+((x,y)-1) \mod \Delta^{i+j+1}$$ 
whenever $x-1\in \Delta^i$ and $y-1\in \Delta^j$,
we deduce that if $\alpha_1,\alpha_2\in \F_2$ then 
    $$(\alpha_1(1+b_1)+\alpha_2(1+b_2))^2 \equiv \alpha_1 (1+b_1)^2 + \alpha_2 (1+b_2)^2 + \alpha_1\alpha_2(1+a) \mod \Delta^3.$$
Thus $1+a\in \Delta^2$ and hence $1+a^4 = 1+(b_2^2,b_1^2)\in \Delta^8$. 
Therefore 
    $$(\alpha_1(1+b_1)+\alpha_2(1+b_2))^4 \equiv \alpha_1 (1+b_1^4) + \alpha_2 (1+b_2^4) + \alpha_1\alpha_2(1+a^2) \mod \Delta^5.$$
More generally, for every $i\ge 1$ we have 
    $$(\alpha_1(1+b_1)+\alpha_2(1+b_2))^{2^i} \equiv 
    \alpha_1 (1+b_1^{2^i}) + \alpha_2 (1+b_2^{2^i}) + \alpha_1\alpha_2(1+a^{2^{i-1}}) \mod \Delta^{2^i+1}.$$
    
Suppose first that $s_1=m-1$. As $s_1<s'_2$, applying condition (A4) to $(m,n_1,n_2,s_1,s'_2)$ it follows that $m=n_1$. 
Then 
    \begin{eqnarray*}
     &&f_mf_{m-1}\cdots f_1(\alpha_1(1+b_1)+\alpha_2(1+b_2)) \\
     &\quad &= \alpha_1 (1+a^{2^{s_1}}) + \alpha_2 (1+a^{2^{s_2}}) + \alpha_1\alpha_2 (1+a^{2^{m-1}})  \\
     &\quad &=  (\alpha_1 + \alpha_2+ \alpha_1\alpha_2)(1+a^{2^{m-1}})
    \end{eqnarray*}
 and 
    \begin{eqnarray*}
     &&g_mg_{m-1}\cdots g_1(\alpha_1(1+b_1)+\alpha_2(1+b_2)) \\
     &\quad &= \alpha_1 (1+a^{2^{s_1}}) + \alpha_2 (1+a^{2^{s'_2}}) +\alpha_1\alpha_2 (1+a^{2^{m-1}})  \\
     &\quad &=  \alpha_1 (1+\alpha_2)(1+a^{2^{m-1}})
    \end{eqnarray*}
Therefore $X_{m,G}=\{(0,0)\}$ while $X_{m,H}=\{(0,0),(0,1),(1,1)\}$, a contradiction.

Suppose now that $s_1=m$. Then $m<n_1$ by condition (A3). Then 
    \begin{eqnarray*}
     &&f_mf_{m-1}\cdots f_1(\alpha_1(1+b_1)+\alpha_2(1+b_2)) \\
     &\quad &= \alpha_1 (1+b_1^{2^m}) + \alpha_2 (1+a^{2^{s_2}}) + \alpha_1\alpha_2 (1+a^{2^{m-1}})  \\
     &\quad &=  \alpha_1 (1+b_1^{2^m}) + \alpha_2(1+\alpha_1)(1+a^{2^{m-1}})
    \end{eqnarray*}
 and 
    \begin{eqnarray*}
     &&g_mg_{m-1}\cdots g_1(\alpha_1(1+b_1)+\alpha_2(1+b_2)) \\
     &\quad &= \alpha_1 (1+b_1^{2^m}) + \alpha_2 (1+a^{2^{s'_2}}) + \alpha_1\alpha_2 (1+a^{2^{m-1}})  \\
     &\quad &=  \alpha_1 (1+b_1^{2^m}) + \alpha_1\alpha_2(1+a^{2^{m-1}})
    \end{eqnarray*}
Hence $X_{m,G}=\{(0,0)\}$ while $X_{m,H}=\{(0,0),(0,1)\}$, again a contradiction.
This finishes the proof of Theorem~\ref{Main}.
\end{proofof}

\bibliographystyle{amsalpha}
\bibliography{ReferencesMSC}

\providecommand{\bysame}{\leavevmode\hbox to3em{\hrulefill}\thinspace}
\providecommand{\MR}{\relax\ifhmode\unskip\space\fi MR }
% \MRhref is called by the amsart/book/proc definition of \MR.
\providecommand{\MRhref}[2]{%
  \href{http://www.ams.org/mathscinet-getitem?mr=#1}{#2}
}
\providecommand{\href}[2]{#2}
\begin{thebibliography}{BKRW99}

\bibitem[AMM12]{AMM}
A.~Ahmad, A.~Magidin, and R.~F. Morse, \emph{Two generator {$p$}-groups of
  nilpotency class 2 and their conjugacy classes}, Publ. Math. Debrecen
  \textbf{81} (2012), no.~1-2, 145--166. \MR{2957506}

\bibitem[Bag88]{Baginski1988}
C.~Bagi\'{n}ski, \emph{The isomorphism question for modular group algebras of
  metacyclic {$p$}-groups}, Proc. Amer. Math. Soc. \textbf{104} (1988), no.~1,
  39--42. \MR{958039}

\bibitem[BK07]{BaginskiKonovalov}
C.~Bagi\'{n}ski and A.~Konovalov, \emph{The modular isomorphism problem for
  finite {$p$}-groups with a cyclic subgroup of index {$p^2$}}, Groups {S}t.
  {A}ndrews 2005. {V}ol. 1, London Math. Soc. Lecture Note Ser., vol. 339,
  Cambridge Univ. Press, Cambridge, 2007, pp.~186--193. \MR{2327323}

\bibitem[BKRW99]{BKRW}
F.~M. Bleher, W.~Kimmerle, K.~W. Roggenkamp, and M.~Wursthorn,
  \emph{Computational aspects of the isomorphism problem}, Algorithmic algebra
  and number theory ({H}eidelberg, 1997), Springer, Berlin, 1999, pp.~313--329.
  \MR{1672070}

\bibitem[Col64]{Coleman1964}
D.~B. Coleman, \emph{On the modular group ring of a {$p$}-group}, Proc. Amer.
  Math. Soc. \textbf{15} (1964), 511--514. \MR{165015}

\bibitem[Des56]{Deskins1956}
W.~E. Deskins, \emph{Finite {A}belian groups with isomorphic group algebras},
  Duke Math. J. \textbf{23} (1956), 35--40. \MR{77535}

\bibitem[Dre89]{Drensky1989}
V.~Drensky, \emph{The isomorphism problem for modular group algebras of groups
  with large centres}, Representation theory, group rings, and coding theory,
  Contemp. Math., vol.~93, Amer. Math. Soc., Providence, RI, 1989,
  pp.~145--153. \MR{1003349}

\bibitem[Eic08]{Eick2008}
B.~Eick, \emph{Computing automorphism groups and testing isomorphisms for
  modular group algebras}, J. Algebra \textbf{320} (2008), no.~11, 3895--3910.
  \MR{2464798}

\bibitem[EK11]{EickKonovalov2011}
Bettina Eick and Alexander Konovalov, \emph{The modular isomorphism problem for
  the groups of order 512}, Groups {S}t {A}ndrews 2009 in {B}ath. {V}olume 2,
  London Math. Soc. Lecture Note Ser., vol. 388, Cambridge Univ. Press,
  Cambridge, 2011, pp.~375--383. \MR{2858869}

\bibitem[HS06]{HertweckSoriano2006}
M.~Hertweck and M.~Soriano, \emph{On the modular isomorphism problem: groups of
  order {$2^6$}}, Groups, rings and algebras, Contemp. Math., vol. 420, Amer.
  Math. Soc., Providence, RI, 2006, pp.~177--213. \MR{2279240}

\bibitem[Jen41]{Jennings1941}
S.~A. Jennings, \emph{The structure of the group ring of a {$p$}-group over a
  modular field}, Trans. Amer. Math. Soc. \textbf{50} (1941), 175--185.
  \MR{0004626}

\bibitem[{K}{\"{u}}l82]{Kulshammer1982}
B.~{K}{\"{u}}lshammer, \emph{Bemerkungen \"{u}ber die {G}ruppenalgebra als
  symmetrische {A}lgebra. {II}}, J. Algebra \textbf{75} (1982), no.~1, 59--69.
  \MR{650409}

\bibitem[Mak76]{Makasikis}
A.~Makasikis, \emph{Sur l'isomorphie d'alg\`ebres de groupes sur un champ
  modulaire}, Bull. Soc. Math. Belg. \textbf{28} (1976), no.~2, 91--109.
  \MR{561324}

\bibitem[Pas65]{Passman1965}
D.~S. Passman, \emph{The group algebras of groups of order {$p^{4}$} over a
  modular field}, Michigan Math. J. \textbf{12} (1965), 405--415. \MR{0185022}

\bibitem[Rob82]{Robinson1982}
D.~J.~S. Robinson, \emph{A course in the theory of groups}, Graduate Texts in
  Mathematics, vol.~80, Springer-Verlag, New York, 1982. \MR{648604
  (84k:20001)}

\bibitem[San85]{Sandling1984}
R.~Sandling, \emph{The isomorphism problem for group rings: a survey}, Orders
  and their applications ({O}berwolfach, 1984), Lecture Notes in Math., vol.
  1142, Springer, Berlin, 1985, pp.~256--288. \MR{812504}

\bibitem[San89]{Sandling1989}
\bysame, \emph{The modular algebra of a central-elementary-by-abelian group},
  Arch. Math. (Basel) \textbf{52} (1989), no.~1, 22--27. \MR{1343723}

\bibitem[San96]{Sandling1996}
\bysame, \emph{The modular group algebra problem for metacyclic {$p$}-groups},
  Proc. Amer. Math. Soc. \textbf{124} (1996), no.~5, 1347--1350. \MR{1343723}

\bibitem[Seh78]{Sehgal1978}
S.~K. Sehgal, \emph{Topics in group rings}, Monographs and Textbooks in Pure
  and Applied Math., vol.~50, Marcel Dekker Inc., New York, 1978. \MR{508515
  (80j:16001)}

\bibitem[War]{Ward}
H.~N. Ward, \emph{Some results on the group algebra of a $p$-group over a prime
  field}, Seminar on finite groups and related topics., Mimeographed notes,
  Harvard Univ., pp.~13--19.

\bibitem[Wur90]{Wursthorn1990}
M.~Wursthorn, \emph{{D}ie modularen {G}ruppenringe der {G}ruppen der {O}rdnung
  $2^6$}, 1990, {D}iplomarbeit, Universitat Stuttgart.

\bibitem[Wur93]{Wursthorn1993}
\bysame, \emph{Isomorphisms of modular group algebras: an algorithm and its
  application to groups of order {$2^6$}}, J. Symbolic Comput. \textbf{15}
  (1993), no.~2, 211--227. \MR{1218760}

\end{thebibliography}

\end{document}